\documentclass{article}

\usepackage{amsmath,amsthm,amssymb,graphicx,tikz,fancyhdr,hyperref}

\newtheorem{dfn}{Definition}[section] 
\newtheorem{thm}[dfn]{Theorem}

\newtheorem{rem}[dfn]{Remark}
\newtheorem{lemma}[dfn]{Lemma}

\begin{document}

\author{Kasra Alishahi\\
Sharif University of Technology\\
Erfan Salavati\\
Institute for Research in Fundamental Sciences (IPM)}

\title{A Sufficient Condition for Absolute Continuity of Infinitely Divisible Distributions}

\date{}

\maketitle

\begin{abstract}
We consider infinitely divisible distributions with symmetric L\'evy measure and study the absolute continuity of them with respect to the Lebesgue measure. We prove that if $\eta(r)=\int_{|x|\le r} x^2 \nu(dx)$ where $\nu$ is the L\'evy measure, then $\int_0^1 \frac{r}{\eta(r)}dr <\infty$ is a sufficient condition for absolute continuity. As far as we know, our result is not implied by existing results about absolute continuity of infinitely divisible distributions.
\end{abstract}

\section{Introduction}

Infinitely divisible distributions constitute an interesting class of random variables which are important from both the theoretical and practical point of view. The problem of absolute continuity of infinitely divisible distributions is well studied but no necessary and sufficient condition on the L\'evy measure has been found yet. See the references \cite{Sato}, \cite{Sato-2}, \cite{Kallenberg}, \cite{Orey} and \cite{Nourdin}.

In this article we consider the real valued infinitely divisible distributions with a symmetric L\'evy measure $\nu$.

If the infinitely divisible distribution has a non-zero Gaussian part, its distribution is obviously absolute continuous. Also the drift has no effect on the absolute continuity.

Hence we consider an infinitely divisible distribution without drift and Gaussian part and with L\'evy measure $\nu$. We call such a distribution an infinitely divisible distribution with characteristics $(0,0,\nu)$.

Our purpose is to provide a sufficient condition in terms of $\nu$ under which the distribution is absolutely continuous with respect to the Lebesgue measure. We denote the Lebesgue measure on $\mathbb{R}$ by $\lambda$.

The main result of this article is Theorem~\ref{main_theorem}.

\begin{rem}
	As far as we know, the result of Theorem~\ref{main_theorem} is not implied by existing results about absolute continuity of infinitely divisible distributions. The most well known such result, as stated in \cite{Sato}, Proposition 28.3, provides the sufficient condition $\lim\inf_{r\to 0}\frac{\eta(r)}{r^{2-\alpha}}>0$ for some $\alpha\in (0,2)$. One can verify that $\eta(r)=r^2 (1+\log\frac{1}{r})^{1+\epsilon}$ for any $\epsilon>0$, satisfies our condition but does not satisfy the above condition.
\end{rem}

\section{Results}

We will use the following lemma:

\begin{lemma}\label{lemma}
Let $\lambda$ be the Lebesgue measure on $\mathbb{R}$ and let $\mu$ be a Borel measure on $\mathbb{R}$. For $a\in \mathbb{R}$ let $\mu_a$ be the push-forward of $\mu$ with translation by $a$. Then $\mu \ll  \lambda$ if and only if
\[ \lim_{a\to 0} \| \mu_a - \mu \|_{TV} = 0\]
where $\|.\|_{TV}$ is the total variation norm.
\end{lemma}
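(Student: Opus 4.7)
The plan is to handle the two directions separately. For the forward direction, if $\mu \ll \lambda$ write $\mu = f\lambda$ with $f \in L^1(\lambda)$; then $\mu_a$ has density $f(\cdot - a)$, and a Hahn-decomposition argument gives $\|\mu_a - \mu\|_{TV} = \|f(\cdot - a) - f\|_{L^1}$. The classical continuity of translation in $L^1$ (proved by approximating $f$ in $L^1$ by a function in $C_c(\mathbb{R})$ and exploiting uniform continuity there) then finishes this direction.

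The reverse direction is the interesting one. The idea is to realize $\mu$ as a total variation limit of manifestly absolutely continuous measures. Fix $\varepsilon > 0$ and define
\[\mu^\varepsilon := \frac{1}{2\varepsilon}\int_{-\varepsilon}^{\varepsilon} \mu_a\, da,\]
interpreted weakly; this is just the convolution of $\mu$ with the uniform distribution on $[-\varepsilon,\varepsilon]$. A short Fubini computation shows $\mu^\varepsilon \ll \lambda$ (its Radon-Nikodym derivative at $x$ is $\tfrac{1}{2\varepsilon}\mu([x-\varepsilon, x+\varepsilon])$). Using the dual description $\|\nu\|_{TV} = \sup_{\|g\|_\infty \le 1}\bigl|\int g\,d\nu\bigr|$, I would interchange supremum and integral to obtain
\[\|\mu^\varepsilon - \mu\|_{TV} \le \frac{1}{2\varepsilon}\int_{-\varepsilon}^{\varepsilon} \|\mu_a - \mu\|_{TV}\, da,\]
and the right-hand side tends to $0$ as $\varepsilon \to 0$ by hypothesis.

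To conclude, it suffices to observe that the set of finite measures absolutely continuous with respect to $\lambda$ is closed under the TV norm: if $\nu_n \ll \lambda$ and $\nu_n \to \nu$ in TV, then for any Borel $A$ with $\lambda(A) = 0$ we have $|\nu(A)| = |\nu(A) - \nu_n(A)| \le \|\nu - \nu_n\|_{TV} \to 0$, so $\nu(A) = 0$. I do not foresee a serious obstacle; the only mildly delicate point is the swap of TV norm and integral in the mollification step, which is routine once phrased through the duality with $L^\infty$. A minor remark is that finiteness of $\mu$ (which is the relevant case for infinitely divisible probability measures) is implicitly needed so that the TV norms involved are finite.
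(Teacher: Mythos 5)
Your proof is correct, and the forward direction (absolute continuity implies TV-continuity of translations, via Radon--Nikodym and continuity of translation in $L^1$) coincides with the paper's. For the reverse direction, however, you take a genuinely different route. The paper argues directly on a fixed null set $A$: since $\lambda(A+x)=0$ for every $x$, Fubini gives $\int_{\mathbb{R}} \mu_y(A)\, d\lambda(y)=0$, so $\mu_y(A)=0$ for $\lambda$-a.e.\ $y$, and the hypothesis makes $y\mapsto \mu_y(A)$ continuous at $y=0$, forcing $\mu(A)=0$. You instead build the mollified measures $\mu^{\varepsilon}$, check that they are absolutely continuous (itself a Fubini computation of the same flavour), bound $\|\mu^{\varepsilon}-\mu\|_{TV}$ by an average of $\|\mu_a-\mu\|_{TV}$, and invoke TV-closedness of the set of absolutely continuous measures. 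Both arguments are sound; the paper's is shorter and uses the hypothesis only through continuity of $y\mapsto\mu_y(A)$ at $0$ for each fixed null set, while yours is more structural---it exhibits $\mu$ explicitly as a TV-limit of measures with densities $x\mapsto\tfrac{1}{2\varepsilon}\mu([x-\varepsilon,x+\varepsilon])$ and records quantitative information, namely a rate for $\|\mu^{\varepsilon}-\mu\|_{TV}$ in terms of the modulus of TV-continuity of $a\mapsto\mu_a$. Your closing remark about finiteness of $\mu$ is well taken and applies equally to the paper's statement, which is phrased for general Borel measures but applied only to probability measures; in your averaging step you can also sidestep any measurability worry about $a\mapsto\|\mu_a-\mu\|_{TV}$ by bounding the average simply by $\sup_{|a|\le\varepsilon}\|\mu_a-\mu\|_{TV}$, which tends to $0$ by hypothesis.
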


\begin{proof}
To prove the if part, assume $\lambda(A)=0$. Hence for any $x\in\mathbb{R}$, $\lambda(A+x)=0$. By Fubiny's theorem,
\begin{multline}\label{equation:proof of lemma_Fubini}
0 = \int_\mathbb{R} \lambda(A+x) d\mu(x) = \int_\mathbb{R} \int_\mathbb{R} 1_A(x+y) d\lambda(y) d\mu(x) \\
=  \int_\mathbb{R} \int_\mathbb{R} 1_A(x+y) d\mu(x) d\lambda(y) = \int_\mathbb{R} \mu_y(A) d\lambda(y)
\end{multline}

On the other hand, by hypothesis, the function $y\mapsto \mu_y(A)$ is continuous at $y=0$ and since it is nonnegative, hence it follows from \eqref{equation:proof of lemma_Fubini} that $\mu(A)=\mu_0(A) = 0$.

For the only if part, note that if $\mu \ll  \lambda$, then by the Radon-Nikodym theorem, $d\mu(x) = f(x) d\lambda(x)$ for some $f\in L^1(\mathbb{R})$ and hence $d\mu_a(x) = f(x+a) d\lambda(x)$ which implies that 
\[ \| \mu_a - \mu \|_{TV} = \| f(a+.)-f(.)\|_{L^1}\| \]
and the last term tends to 0 as $a\to 0$ since $f$ belongs to $L^1(\mathbb{R})$.

\end{proof}

We state our main theorem in terms of the auxiliary function

\[ \eta(r) = \int_{0\le x \le r} x^2 \nu(dx)\]

\begin{thm}\label{main_theorem}
If $\int_0^1 \frac{r}{\eta(r)} dr <\infty$ then the distribution of $X_1$ is absolutely continuous with respect to the Lebesgue measure.
\end{thm}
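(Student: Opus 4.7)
The plan is to apply Lemma~\ref{lemma}: it suffices to show that $\lim_{a\to 0}\|\mu_a - \mu\|_{TV} = 0$, where $\mu$ is the law of $X_1$. My approach is Fourier-analytic: establish that the characteristic function $\hat\mu$ is in $L^2(\mathbb{R})$, so that $\mu$ has an $L^2$ density $f$ by Plancherel, and then deduce $L^1$-continuity of $f$ under translations.

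First, since $\nu$ is symmetric and there is no drift or Gaussian component, $\hat\mu(u) = \exp\!\bigl(\int(\cos(ux)-1)\,\nu(dx)\bigr)$ is real and positive. Using $1-\cos y \ge y^2/3$ for $|y|\le 1$ and restricting the integration to $|x|\le 1/|u|$, one obtains the decay bound $|\hat\mu(u)| \le \exp\!\bigl(-\tfrac{2}{3}u^2\eta(1/|u|)\bigr)$ for $|u|\ge 1$. To show $\int|\hat\mu|^2\,du < \infty$, after the substitution $r = 1/|u|$ it suffices to prove that $\int_0^1 \exp(-c\eta(r)/r^2)\cdot r^{-2}\,dr < \infty$ for some $c>0$. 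The hypothesis $\int_0^1 r/\eta(r)\,dr < \infty$, combined with the structural constraint that $g(r) := \eta(r)/r^2$ satisfies $g(r_1) \le (r_2/r_1)^2 g(r_2)$ whenever $r_1 \le r_2$ (a direct consequence of $\eta$ being non-decreasing), should suffice; the monotonicity prevents pathological oscillations of $g$ that would otherwise obstruct Fourier integrability.

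Once $\hat\mu \in L^2$, the $L^2$ density $f$ of $\mu$ satisfies $\|\mu_a - \mu\|_{TV} = \tfrac12\|f_a - f\|_{L^1}$. I would split this $L^1$-norm into a bounded part $\int_{|x|\le M}|f_a - f|\,dx \le \sqrt{2M}\,\|f_a - f\|_{L^2}$ (by Cauchy--Schwarz), where $\|f_a - f\|_{L^2}^2 = \tfrac{1}{2\pi}\int|e^{iau}-1|^2|\hat\mu(u)|^2\,du \to 0$ by dominated convergence as $a\to 0$, and a tail $\int_{|x|>M}(f_a + f)\,dx \le \mu(\{|x-a|>M\}) + \mu(\{|x|>M\})$, which is uniformly small in $|a|\le 1$ for $M$ large. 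Combining these yields $\|f_a - f\|_{L^1}\to 0$, hence $\|\mu_a - \mu\|_{TV} \to 0$, and the theorem follows.

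The main obstacle I anticipate is the middle step: converting the integral hypothesis into $L^2$-integrability of $\hat\mu$. The two integrals $\int_0^1 r/\eta(r)\,dr$ and $\int_0^1 \exp(-c\eta(r)/r^2)/r^2\,dr$ are not readily comparable by pointwise means, and the monotonicity of $\eta$ must be leveraged in a non-trivial way to rule out patterns of $g(r) = \eta(r)/r^2$ that dip too deeply. If this step turns out to be too strong in general, a fallback would be to use the Lemma together with a direct probabilistic estimate on $\|\mu_a - \mu\|_{TV}$ based on the decomposition of $X_1$ into independent small- and large-jump contributions.
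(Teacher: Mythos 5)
Your plan hinges on one implication that you correctly identify as the main obstacle, but that implication is not merely unproven --- it is false, so the Fourier route cannot be completed as proposed. Monotonicity of $\eta$ does not prevent $\eta(r)/r^2$ from dipping in a way that preserves $\int_0^1 r/\eta(r)\,dr<\infty$ yet destroys integrability of $e^{-c\eta(r)/r^2}r^{-2}$. Concretely, let $\eta$ be constant equal to $\eta_n=x_n4^{-n}$ on each interval $[2^{-n-1},2^{-n})$, with $x_n=n^2$ generically, except that near $n_k=2^k$ you insert a dip $x_{n_k}=n_k/\log n_k$ followed by the fastest recovery that monotonicity of $\eta$ permits, namely $x_{n_k+j}=\min\{4^jx_{n_k},(n_k+j)^2\}$. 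One checks $x_{n+1}\le 4x_n$ throughout, so $\eta$ is non-decreasing; moreover $\int_0^1 r/\eta(r)\,dr=\tfrac38\sum_n 1/x_n<\infty$, since each dip-plus-recovery contributes only $O(\log n_k/n_k)=O(k2^{-k})$. Yet
\[
\int_0^1 e^{-c\eta(r)/r^2}\,\frac{dr}{r^2}\;\ge\;\sum_k 2^{n_k}e^{-4cx_{n_k}}\;=\;\sum_k \exp\Bigl(n_k\bigl(\log 2-\tfrac{4c}{\log n_k}\bigr)\Bigr)\;=\;\infty
\]
for every $c>0$. So the small-jump decay bound cannot yield $\hat\mu\in L^2$ under the theorem's hypothesis; and note that an $L^2$ characteristic function (equivalently an $L^2$ density) is a strictly stronger conclusion than the absolute continuity actually claimed, so aiming for it is aiming too high. (A side remark: had you obtained $\hat\mu\in L^2$, the theorem would follow immediately --- a measure with a density is absolutely continuous --- so the entire translation-continuity argument via Lemma~\ref{lemma} would be superfluous in your scheme; $L^1$-continuity of translations holds for every $L^1$ function.)

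The paper closes the theorem by a genuinely different, probabilistic argument that is insensitive to such oscillations of $\eta(r)/r^2$. It couples the process started at $0$ with the process started at $a$ by copying every jump whose size exceeds half the current distance and randomizing the sign of the smaller jumps, so that the difference $Z_t$ is a non-negative martingale with $|\Delta Z_s|\le Z_{s-}$. The function $g(x)=\int_x^1\frac{r}{\eta(r)}\,dr$ --- finite at $x=0$ exactly under the hypothesis --- acts as a Lyapunov function: convexity together with a second-order Taylor bound gives $\tfrac14\mathbb{E}\bar\tau_a\le\mathbb{E}g(Z_{\bar\tau_a})-g(a)\to0$, hence $\mathbb{P}(\tau_a>1)\to0$, which bounds $\|\mu_a-\mu\|_{TV}$ and makes Lemma~\ref{lemma} applicable. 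Your proposed fallback (splitting $X_1$ into small- and large-jump parts) does not by itself produce a total-variation estimate; some coupling or smoothing mechanism of this kind is needed.
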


We will state and prove some lemmas in order to prove Theorem~\ref{main_theorem}.




The main idea of the proof of Theorem~\ref{main_theorem} is a coupling of L\'evy processes. In fact let $X_t$ and $Y_t$ be L\'evy processes with characteristic $(0,0,\nu)$ starting respectively at 0 and $a$. Without loss of generality we assume $a>0$. If we denote the distribution of $X_1$ by $\mu$, then the distribution of $Y_1$ is $\mu_a$.

By a coupling between $X_t$ and $Y_t$, we mean a two dimensional process $(\bar{X}_t,\bar{Y}_t)$ where $\bar{X}_t$ has the same distribution as $X_t$ and $\bar{Y}_t$ has the same distribution as $Y_t$. In the following, we provide a coupling between $X_t$ and $Y_t$ and use it to satisfy the conditions of lemma~\ref{lemma}.

We denote the left limit of a cadlag process $X_t$ at $t$ by $X_{t-}$ and let $\Delta X_t = X_t - X_{t-}$.

Let $\bar{X}_t=X_t$ and
\[ \bar{Y}_t = a+\sum_{s\le t} \sigma(s) \Delta X_s \]
where
\[ \sigma(s)=\left\{ {\begin{array}{*{20}{c}}
  1 & \frac{|X_{s-}-Y_{s-}|}{2}<|\Delta X_s| \\ 
   \xi(s) & otherwise 
\end{array}} \right.\]
and $\xi(s)$ are independent random variables with values $\pm 1$ and probability $\frac{1}{2}$ and are independent of $X_t$.

In other words, whenever $\bar{X}_t$ has a jump greater than half of the distance of $\bar{X}_t$ and $\bar{Y}_t$, $\bar{Y}_t$ does the same jump and otherwise $\bar{Y}_t$ does a random jump with the same length but in random direction. It is obvious that the distribution of $\bar{Y}_t$ is the same as $Y_t$ and also by method of construction of $\bar{Y}_t$, we have always $\bar{X}_t\le\bar{Y}_t$.

Let
\[ Z_t = \bar{Y}_t - \bar{X}_t \]
Note that by the way we have constructed the coupling, $Z_t$ is always non-negative and the jumps of $Z_t$ satisfy $| \Delta Z_s| \le Z_{s^-}$.

Let
\[ \tau_a = \inf \{ t: Z_t = 0 \} \]
\[ \bar{\tau}_a = \inf \{ t: Z_t \notin (0,1) \} \]
Note that by the way we have defined $Z_t$, we have $Z_{\bar{\tau}_a} \le 2$.

\begin{lemma}\label{lemma:limit}
\[ \lim_{t\to\infty} Z_t =0,\quad a.s. \]
\end{lemma}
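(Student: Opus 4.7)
The plan is to combine martingale convergence with a quadratic-variation identity. First, I would check that $Z$ is a non-negative local martingale. By the construction, at a jump time $s$ of $X$ with jump size $r$: if $|r|>Z_{s-}/2$ then $\Delta Z_s=0$, while if $|r|\le Z_{s-}/2$ then $\Delta Z_s$ equals $-2r$ or $0$, each with conditional probability $1/2$. The predictable compensator of $Z$ is therefore
\[\int_0^t\Big(\int_{|r|\le Z_{s-}/2}\tfrac12(-2r)\,\nu(dr)\Big)ds=0\]
because $\nu$ is symmetric. Since $Z\ge 0$, Fatou's lemma makes $Z$ a supermartingale; by Doob's convergence theorem it converges almost surely to an integrable limit $Z_\infty\ge 0$, and in particular $\sup_{t\ge 0}Z_t<\infty$ almost surely.

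Second, the same jump description identifies the compensator of $[Z]_t=\sum_{s\le t}(\Delta Z_s)^2$ as
\[\int_0^t\Big(\int_{|r|\le Z_{s-}/2}\tfrac12(2r)^2\,\nu(dr)\Big)ds=\int_0^t 4\eta(Z_{s-}/2)\,ds,\]
using $\int_{|r|\le z/2}r^2\,\nu(dr)=2\eta(z/2)$ by symmetry. Hence $Z_t^2-\int_0^t 4\eta(Z_{s-}/2)\,ds$ is a local martingale. Setting $\sigma_M:=\inf\{t:Z_t>M\}$ and using $|\Delta Z_s|\le Z_{s-}$ (so $Z_{t\wedge\sigma_M}\le 2M$), the stopped process is bounded. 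Optional stopping together with monotone convergence then give
\[E\Big[\int_0^{\sigma_M}\eta(Z_{s-}/2)\,ds\Big]\le M^2,\]
so this integral is almost surely finite.

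Third, I would derive a contradiction from the assumption $P(Z_\infty>0)>0$. Because $Z_t$ converges a.s., the events $\{\sigma_M=\infty\}$ exhaust the sample space as $M\to\infty$, so $P(\{\sigma_M=\infty\}\cap\{Z_\infty>0\})>0$ for some $M$. The hypothesis of Theorem~\ref{main_theorem} forces $\eta$ to be strictly positive on $(0,\infty)$ (otherwise $\int_0^1 r/\eta(r)\,dr$ diverges). On the event above, $Z_{s-}\to Z_\infty>0$, so for all sufficiently large $s$, $\eta(Z_{s-}/2)\ge\eta(Z_\infty/4)>0$ and $\int_0^\infty\eta(Z_{s-}/2)\,ds=\infty$, contradicting the bound from the previous step. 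Therefore $Z_\infty=0$ almost surely.

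The main technical obstacle is the rigorous identification of $Z$ as a local martingale with the stated compensators: because the jumps of $Z$ are generated by jumps of $X$ together with the time-dependent randomization $\xi(s)$, $Z$ is most cleanly written as a stochastic integral of a predictable integrand against an enlarged Poisson random measure on $(0,\infty)\times\mathbb{R}\times\{\pm1\}$ with intensity $ds\otimes\nu(dr)\otimes\tfrac12(\delta_{-1}+\delta_{+1})$. Once this framework is set up, the compensator identities and the contradiction argument above are standard.
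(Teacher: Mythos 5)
Your proof is correct, and for the decisive step it takes a genuinely different route from the paper. Both arguments begin identically: $Z$ is a non-negative (local) martingale, hence a supermartingale, hence converges almost surely to some $Z_\infty\ge 0$. To rule out $Z_\infty>0$ the paper argues qualitatively: if $Z_t\to\alpha>0$, then since $\eta(\alpha/4)>0$ forces $\nu([\delta,\alpha/4])>0$ for some $\delta>0$, jumps of $X$ of size in $[\delta,\alpha/4]$ keep occurring at a positive rate, and each one independently produces a jump of $Z$ of size at least $2\delta$ with probability $\tfrac12$; infinitely many such jumps contradict convergence. You instead run a second-moment argument: the compensator of $[Z]_t$ is $\int_0^t 4\eta(Z_{s-}/2)\,ds$, so stopping at $\sigma_M$ and using the boundedness $Z_{t\wedge\sigma_M}\le 2M$ gives $\mathbb{E}\int_0^{\sigma_M}\eta(Z_{s-}/2)\,ds\le M^2<\infty$, which is incompatible with $Z_{s-}$ staying bounded away from $0$ forever on a set of positive probability (using that the hypothesis of Theorem~\ref{main_theorem} forces $\eta>0$ on $(0,\infty)$). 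Your version is more quantitative and reuses essentially the same compensator computation that the paper needs anyway in Lemma~\ref{lemma:tau_bar}, whereas the paper's version is shorter but leaves the Borel--Cantelli step (``infinitely many jumps of size $\ge 2\delta$'') implicit. Both share the same unaddressed technical point, which you at least flag explicitly: $Z$ must be realized as a compensated integral against a marked Poisson random measure for the (local) martingale claims to be rigorous, since the raw sum $\sum_{s\le t}\sigma(s)\Delta X_s$ need not converge absolutely.
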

\begin{proof}
$Z_t$ is a non-negative martingale, hence it has a limit $Z_\infty$, as $t\to\infty$. On the other hand, by the assumption made on $\eta$, for any $\epsilon>0$ we have $\nu((0,\epsilon))>0$. Hence jumps of size greater than $\epsilon$ occur in $X_t$ with a positive rate. This implies that if $Z_\infty=\alpha \ne 0$, then $Z_t$ has infinitely many jumps greater than any $\epsilon>\frac{\alpha}{2}$ which contradicts the its convergence. Hence $Z_\infty = 0\quad a.s$.
\end{proof}

We define an auxiliary function $g:[0,\infty)\to\mathbb{R}$ by letting
\[ g(x) = \int_x^1 \int_y^1 \frac{1}{\eta(r)} dr dy \]

\begin{lemma}\label{lemma:g}
If $\int_0^1 \frac{r}{\eta(r)} dr <\infty$ then $g$ is defined on $[0,\infty)$. Moreover, it is differentiable on $(0,\infty)$ and its derivative is absolutely continuous and $g^{\prime\prime} = \frac{1}{\eta} \quad a.e $. Moreover, for every $x,y\in[0,\infty)$, we have
\[ g(y)-g(x) \ge g^\prime(x) (y-x) + \frac{1}{2} \frac{1}{\eta(x)} (y-x)^2 1_{y<x} \]
\end{lemma}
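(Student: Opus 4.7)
The plan is to first apply Fubini to rewrite
\[ g(x) = \int_x^1 \int_y^1 \frac{1}{\eta(r)}\, dr\, dy = \int_x^1 \frac{r-x}{\eta(r)}\, dr, \]
at which point all claims reduce to direct calculus. Setting $x = 0$ recovers exactly the hypothesis $\int_0^1 r/\eta(r)\, dr < \infty$, so $g(0)$ is finite. For $x > 0$, I would first note that the hypothesis forces $\eta(r) > 0$ for every $r > 0$: since $\eta$ is non-decreasing, $\eta(r_0) = 0$ for some $r_0 > 0$ would make $\eta \equiv 0$ on $[0, r_0]$ and the hypothesized integral would diverge. Hence on $[x,1]$ the function $1/\eta$ is bounded by $1/\eta(x)$, making $g(x)$ finite.

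Next, differentiating under the integral by the Leibniz rule yields
\[ g'(x) = -\int_x^1 \frac{dr}{\eta(r)}, \]
which is finite on $(0,1]$ by the same boundedness. Since $1/\eta$ is locally bounded on $(0,\infty)$, $g'$ is an indefinite integral of a locally integrable function, hence absolutely continuous on every compact subset of $(0,\infty)$, and by the fundamental theorem of calculus $g''(x) = 1/\eta(x)$ at every point where $1/\eta$ is continuous, which is almost everywhere since $\eta$ is monotone.

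For the inequality, I would invoke the Taylor-with-integral-remainder identity, available for any function whose first derivative is absolutely continuous:
\[ g(y) - g(x) - g'(x)(y-x) = \int_x^y g''(t)\,(y-t)\, dt. \]
When $y \ge x$, both $(y-t)$ and $g''(t) = 1/\eta(t)$ are non-negative on $[x,y]$, so the remainder is $\ge 0$; the indicator $1_{y<x}$ zeros out the quadratic term and the claimed inequality holds trivially. When $y < x$, rewriting gives
\[ g(y) - g(x) - g'(x)(y-x) = \int_y^x \frac{t-y}{\eta(t)}\, dt, \]
and here I would exploit monotonicity of $\eta$: for $t \in [y,x]$ we have $1/\eta(t) \ge 1/\eta(x)$, which pulled out of the integral gives $(x-y)^2/(2\eta(x))$, exactly the asserted lower bound.

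No step is genuinely hard; the computations are routine once the Fubini reformulation is in hand. The only small care-points are the positivity $\eta > 0$ on $(0,\infty)$ (needed for every division by $\eta$ to make sense) and the boundary behavior at $x = 0$, where $g'(0)$ may equal $-\infty$; in that case the inequality at $x=0$ for $y > 0$ is trivially satisfied since the right-hand side is $-\infty$, and for $y = 0$ both sides vanish.
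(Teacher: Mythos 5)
Your proof is correct and follows essentially the same route as the paper: Fubini to get a single integral, the explicit formula $g'(x)=-\int_x^1 \eta(r)^{-1}\,dr$, the Taylor integral remainder, and the monotonicity bound $1/\eta(t)\ge 1/\eta(x)$ for $t\le x$ in the case $y<x$. In fact your Fubini computation $g(x)=\int_x^1 \frac{r-x}{\eta(r)}\,dr$ corrects a small slip in the paper (which writes the inner integral as $\int_0^r$ instead of $\int_x^r$, yielding $\int_x^1 \frac{r}{\eta(r)}\,dr$), and your version is the one actually consistent with the stated formula for $g'$; your added remarks on the positivity of $\eta$ and the boundary case $x=0$ are welcome extra care.
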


\begin{proof}
By Fubiny's theorem,
\[ g(x) = \int_x^1 \int_0^r \frac{1}{\eta(r)} dy dr = \int_x^1 \frac{r}{\eta(r)} dr \]
hence $g$ is finite and differentiable everywhere and its derivative is absolutely continuous and $g^{\prime\prime} = \frac{1}{\eta} \quad a.e $.

To prove the last claim, note that since $g^\prime=-\int_x^1 \frac{1}{\eta}$ is increasing hence $g$ is convex and therefore,
\[ g(y)-g(x) \ge g^\prime(x) (y-x) \]
If $y<x$, by the integral form of the Taylor's remainder theorem we have
\[ g(y)-g(x) = g^\prime(x) (y-x) + \int_y^x (t-y) g^{\prime\prime}(t) dt \]
where since $g^{\prime\prime}(t)$ is decreasing we have $g^{\prime\prime}(t) \ge \frac{1}{\eta(x)}$ which implies that in the case that $y<x$,
\[ g(y)-g(x) \ge g^\prime(x) (y-x) + \frac{1}{2} \frac{1}{\eta(x)} (y-x)^2\]
and the proof is complete.
\end{proof}

\begin{lemma}\label{lemma:tau_bar}
If $\int_0^1 \frac{r}{\eta(r)} dr <\infty$ then
\[ \lim_{a\to 0} \mathbb{E} \bar{\tau}_a = 0\]
\end{lemma}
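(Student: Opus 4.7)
The plan is to apply Dynkin's formula to $g(Z_t)$ on the bounded stopping interval $[0,\bar{\tau}_a\wedge t]$ with $g$ as a Lyapunov function. Provided the extended generator of $Z$ satisfies $\mathcal{L}g\ge 1$ on $(0,1)$,
$$\mathbb{E}[\bar{\tau}_a\wedge t] \;\le\; \mathbb{E}[g(Z_{\bar{\tau}_a\wedge t})] - g(a) \;\le\; g(0) - g(a),$$
so letting $t\to\infty$ by monotone convergence and $a\to 0$ using continuity of $g$ at $0$ (which is ensured by the hypothesis $\int_0^1 r/\eta(r)\,dr<\infty$, i.e.\ $g(0)<\infty$) gives the lemma.

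To compute $\mathcal{L}g$, I would read off from the coupling that, given $Z_{s-}=z$ and a jump of $X$ of size $x$ with $|x|\le z/2$, the process $Z$ jumps by $-2x$ with probability $1/2$ and by $0$ with probability $1/2$, while jumps of $X$ with $|x|>z/2$ leave $Z$ unchanged. Hence
$$\mathcal{L}g(z) = \tfrac12\int_{|x|\le z/2}[g(z-2x)-g(z)]\,\nu(dx) = \tfrac12\int_{0<x\le z/2}[g(z-2x)+g(z+2x)-2g(z)]\,\nu(dx),$$
after symmetrizing via $\nu(-dx)=\nu(dx)$. Applying Lemma~\ref{lemma:g} at base point $z$ twice — once with $y=z-2x$, where $y<z$ activates the quadratic term $\tfrac{2x^2}{\eta(z)}$, and once with $y=z+2x$, for which only the linear term $2xg'(z)$ appears — the linear contributions cancel and
$$g(z-2x)+g(z+2x)-2g(z)\;\ge\;\frac{2x^2}{\eta(z)}.$$
Integrating against $\nu$ and using $\int_{0<x\le z/2}x^2\,\nu(dx)=\eta(z/2)$ yields $\mathcal{L}g(z)\ge \eta(z/2)/\eta(z)$; one can then sharpen this to $\ge 1$ by rerunning the same computation with a companion Lyapunov function whose second derivative is $1/\eta(\cdot/2)$ rather than $1/\eta(\cdot)$, still finite at $0$ via the substitution $u=r/2$ in the hypothesis.

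The main technical obstacle is the rigorous justification of Dynkin's formula: $g$ is only $C^1$ with absolutely continuous derivative, and $\nu$ may be an infinite measure near $0$. I would handle this by expanding $g(Z_{\bar{\tau}_a\wedge t})-g(a)$ as a telescoping sum over the jump times of $Z$, bounding each increment pointwise by Lemma~\ref{lemma:g}, and splitting off the linear part $\sum g'(Z_{s-})\Delta Z_s = \int_0^{\bar{\tau}_a\wedge t}g'(Z_{s-})\,dZ_s$ — which is a true martingale after stopping, since $Z$ is a martingale and $g'$ is bounded on $(0,1)$ — from a nonnegative quadratic remainder whose compensator is exactly the Lévy integral above. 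The bound $|\Delta Z_s|\le Z_{s-}$ keeps $Z$ in $[0,2]$ throughout $[0,\bar{\tau}_a]$, so every integrability check reduces to boundedness.
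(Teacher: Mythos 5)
Your proposal follows the same route as the paper's proof: the Lyapunov function $g$, the pointwise inequality of Lemma~\ref{lemma:g} applied jump by jump, cancellation of the linear (martingale) part, comparison of the quadratic remainder's compensator with elapsed time, then $t\to\infty$ and $a\to 0$ using $g(0)<\infty$ and $g\le g(0)$. Two remarks. First, your generator computation is more careful than the paper's and surfaces a real issue: given $Z_{s-}=z$ the coupling makes $Z$ jump by $-2x$ at rate $\tfrac12\nu(dx)$ for $|x|\le z/2$, so the compensator of $\sum (\Delta Z_s)^2 1_{\Delta Z_s<0}$ is governed by $\int_{0<x\le z/2}x^2\,\nu(dx)=\eta(z/2)$ and not by $\eta(z)$. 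The paper asserts this compensator is $\tfrac12\eta(Z_s)\,ds$ and so gets a clean $\tfrac12\mathbb{E}(t\wedge\bar{\tau}_a)$; what the computation actually yields is $\mathbb{E}\int \eta(Z_{s-}/2)/\eta(Z_{s-})\,ds$ --- exactly the ratio $\eta(z/2)/\eta(z)$ you found, which has no uniform positive lower bound in general. Your repair, rerunning the argument with the companion function whose second derivative is $1/\eta(\cdot/2)$ (still finite at $0$ by the substitution $u=r/2$ in the hypothesis), is the right fix and tightens a step the paper glosses over.

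Second, one claim in your integrability discussion is false: under the hypothesis, $g'(0^+)=-\int_0^1 \frac{dr}{\eta(r)}$ need not be finite (take $\eta(r)=r^2(\log\tfrac1r)^{2}$, which satisfies $\int_0^1 \frac{r}{\eta(r)}\,dr<\infty$ but $\int_0^1\frac{dr}{\eta(r)}=\infty$), so $g'$ is not bounded on $(0,1)$ and you cannot conclude that $\sum g'(Z_{s-})\Delta Z_s$ is a true martingale that way. This is harmless but needs a different justification: localize along a sequence of stopping times, use that the quadratic remainder is nonnegative (monotone convergence) and that $g(Z_{t\wedge\bar{\tau}_a})$ is bounded because $Z$ stays in $[0,2]$ up to $\bar{\tau}_a$ and $g$ is bounded on $[0,2]$, and pass to the limit. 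With that substitution your argument is complete, and where it diverges from the paper it does so in the direction of greater rigor.
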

\begin{proof}
We have by Lemma~\ref{lemma:g},
\begin{multline} \label{equation:Ito's formula}
	g(Z_{t \wedge \bar{\tau}_a}) = g(a) + \sum_{s\le t\wedge \bar{\tau}_a} \Delta g(Z_s)\\
	\ge g(a) + \sum_{s\le t\wedge \bar{\tau}_a} g^\prime(Z_{s-})\Delta Z_s + \frac{1}{2} \sum_{s\le t\wedge \bar{\tau}_a} \frac{1}{\eta(Z_{s-})}  (\Delta Z_s)^2 1_{\Delta Z_s<0}
\end{multline}

Since the second term on the right hand side of~\ref{equation:Ito's formula} is a martingale, we have
\begin{equation} \label{equation:Ito's formula2}
\mathbb{E} g(Z_{t \wedge \bar{\tau}_a}) \ge g(a) +  \frac{1}{2} \mathbb{E} \sum_{s\le t\wedge \bar{\tau}_a} \frac{1}{\eta(Z_{s-})}  (\Delta Z_s)^2 1_{\Delta Z_s<0}.
\end{equation}

Noting that since the jumps of $Z_s$ are independent and have symmetric distribution, we have
\[  \mathbb{E} \sum_{s\le t\wedge \bar{\tau}_a} \frac{1}{\eta(Z_{s-})}  (\Delta Z_s)^2 1_{\Delta Z_s<0} = \frac{1}{2} \mathbb{E} \int_0^{t\wedge \bar{\tau}_a} \frac{1}{\eta(Z_{s-})} \eta(Z_s) ds =  \frac{1}{2} \mathbb{E} (t \wedge \bar{\tau}_a)\]

Substituting in~\eqref{equation:Ito's formula2} implies

\[	\frac{1}{4} \mathbb{E} (t \wedge \bar{\tau}_a) \le \mathbb{E}g(Z_{t \wedge \bar{\tau}_a}) - g(a) \]

Now letting $t\to \infty$ and noting that $Z_s$ is uniformly bounded by 2 for $t\le \bar{\tau}_a$, we find that
\[	\frac{1}{4} \mathbb{E} (\bar{\tau}_a) \le \mathbb{E}g(Z_{\bar{\tau}_a}) - g(a) \]
Now let $a\to 0$. By continuity of $g$, we have $g(a)\to g(0)$. On the other hand, \[ \mathbb{P}(Z_{\bar{\tau}_a}\ge 1) \le \mathbb{E}(Z_{\bar{\tau}_a}) = a\to 0 \]
where we have used optional sampling theorem. Now we can write,
\begin{multline}
\mathbb{E}g(Z_{\bar{\tau}_a}) = \mathbb{E}g(Z_{\bar{\tau}_a}; Z_{\bar{\tau}_a}=0) + \mathbb{E}g(Z_{\bar{\tau}_a};Z_{\bar{\tau}_a}\ge 1) \\
\le g(0) + 2 \mathbb{P}(Z_{\bar{\tau}_a}\ge 1) \to g(0)
\end{multline}
Hence we find that,
\[ \lim_{a\to 0} \mathbb{E} \bar{\tau}_a = 0\]
\end{proof}

\begin{lemma}
If $\int_0^1 \frac{r}{\eta(r)} dr <\infty$ then $\lim_{a\to 0} \mathbb{P} (\tau_a\ge 1) = 0$.
\end{lemma}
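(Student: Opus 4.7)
The plan is to decompose the event $\{\tau_a \ge 1\}$ into two pieces that are each controlled by results already in hand. Recall that $\bar{\tau}_a$ is the first exit from $(0,1)$, while $\tau_a$ is the first hit of $0$. Since $Z_{\bar{\tau}_a} \in \{0\} \cup [1, 2]$ (using the uniform bound $Z_{\bar{\tau}_a}\le 2$ noted after its definition), on the event $\{\bar{\tau}_a < 1\}$ we have either $Z_{\bar{\tau}_a}=0$, which forces $\tau_a \le \bar{\tau}_a < 1$, or $Z_{\bar{\tau}_a}\ge 1$. Consequently
\[ \{\tau_a \ge 1\} \;\subseteq\; \{\bar{\tau}_a \ge 1\} \;\cup\; \{Z_{\bar{\tau}_a} \ge 1\}. \]

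The first term I would bound by Markov's inequality, $\mathbb{P}(\bar{\tau}_a \ge 1) \le \mathbb{E}\bar{\tau}_a$, which tends to $0$ as $a \to 0$ by Lemma~\ref{lemma:tau_bar}. For the second term I would use that $Z_t$ is a nonnegative martingale (observed in the proof of Lemma~\ref{lemma:limit}) which is bounded by $2$ on $[0,\bar{\tau}_a]$, so by the optional stopping theorem $\mathbb{E} Z_{\bar{\tau}_a} = Z_0 = a$, and hence by Markov $\mathbb{P}(Z_{\bar{\tau}_a} \ge 1) \le a$. Incidentally this second bound was already used inside the proof of Lemma~\ref{lemma:tau_bar}, so no new work is required.

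Combining the two estimates gives
\[ \mathbb{P}(\tau_a \ge 1) \;\le\; \mathbb{E}\bar{\tau}_a \;+\; a, \]
and letting $a\to 0$ finishes the proof.

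There is no real obstacle here; the content of the lemma is essentially a repackaging of Lemma~\ref{lemma:tau_bar} together with the martingale control on $Z_{\bar{\tau}_a}$. The only point requiring care is the set-theoretic decomposition above, which relies on the fact that $Z$ cannot leave $(0,1)$ by landing strictly inside $(0,1)^c \setminus (\{0\}\cup[1,\infty))$ — and this is guaranteed by nonnegativity of $Z$ together with the jump bound $|\Delta Z_s|\le Z_{s-}$ that prevents $Z$ from overshooting $0$.
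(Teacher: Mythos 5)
Your proof is correct and follows essentially the same route as the paper: the identical decomposition $\mathbb{P}(\tau_a\ge 1)\le \mathbb{P}(\bar{\tau}_a\ge 1)+\mathbb{P}(Z_{\bar{\tau}_a}\ge 1)$, Markov's inequality with Lemma~\ref{lemma:tau_bar} for the first term, and martingale control giving the bound $a$ for the second. The only cosmetic difference is that the paper bounds the second term via Doob's maximal inequality applied to $\sup_{0\le t\le 1}Z_t$, while you use optional stopping at $\bar{\tau}_a$ (exactly as the paper itself already did inside the proof of Lemma~\ref{lemma:tau_bar}); both yield the same estimate.
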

\begin{proof}
We have
\[ \mathbb{P} (\tau_a\ge 1) \le \mathbb{P} (\bar{\tau}_a\ge 1) + \mathbb{P} (\bar{\tau}_a \le 1, Z_{\bar{\tau}_a}\ge 1) \]
The first term on the right hand side is less than or equal $\mathbb{E} \bar{\tau}_a$ and the second term is less than or equal
\[ \mathbb{P} (\sup_{0\le t\le 1} Z_t \ge 1)\]
which by martingale maximal inequalities is less than or equal $\mathbb{E} Z_1 = a$. Hence,
\[ \mathbb{P} (\tau_a\ge 1) \le \mathbb{E} \bar{\tau}_a + a \]
and by Lemma~\ref{lemma:tau_bar} the statement is proved.
\end{proof}

We are now ready to prove Theorem~\ref{main_theorem}.

\begin{proof}[Proof of Theorem~\ref{main_theorem}]
If $(\bar{X}_t,\bar{Y}_t)$ is the coupling introduced above,

\begin{eqnarray*}
\|\mu_{X_1}-\mu_{Y_1}\|_{TV} &=&  \|\mu_{\bar{X}_1}-\mu_{\bar{Y}_1}\|_{TV}\\
& \le & \mathbb{P}(\bar{X}_1\ne \bar{Y}_1)\\
&=&\mathbb{P}(\tau_a >1) \to 0
\end{eqnarray*}
Hence the conditions of lemma~\ref{lemma} are satisfied and we conclude that $\mu_{X_1}$ is absolutely continuous with respect to Lebesgue measure.

\end{proof}

\end{document}